\theoremstyle{plain}
\newtheorem*{theorem*}{Theorem}
\newtheorem{theorem}{Theorem}[section]
\newtheorem{lemma}[theorem]{Lemma}
\theoremstyle{definition}
\newtheorem{definition}[theorem]{Definition}
\theoremstyle{remark}
\newtheorem{rem}[theorem]{Remark}
\newtheorem{krem}{Note/Todo}
\newtheorem{remark}[theorem]{Remark}
\newcommand{\assign}{:=}
\numberwithin{equation}{section}  
\renewcommand{\Re}{{\rm Re}\,}
\renewcommand{\Im}{{\rm Im}\,}
\newcommand{\ejbh}{e^{\frac{j\beta}2}}
\newcommand{\emjbh}{e^{-\frac{j\beta}2}}
\newcommand{\Cj}{\mathfrak{C}}
\newcommand{\R}{\mathbb{R}}
\newcommand{\C}{\mathbb{ C}}
\newcommand{\Z}{\mathbb{ Z}}
\renewcommand{\H}{\mathbb{ H}}
\newcommand{\N}{\mathbb{ N}}
\newcommand{\trivial}[1]{\underline{\H}^{#1}}
\newcommand{\invers}{^{-1}}
\DeclareMathOperator{\II}{II}
\DeclareMathOperator{\Span}{Span}
\newcommand{\Hh}{{\mathcal H}}
\begin{document}
\title{Families of conformal tori of revolution in the 3--sphere}
\author{K. Leschke}
\address{Katrin Leschke\\ Department of Mathematics\\University of Leicester\\University Road\\
  Leicester, LE1 7RH, UK}

\email{k.leschke@mcs.le.ac.uk}
\thanks{Author partially supported by DFG SPP 1154 ``Global Differential Geometry''}

\begin{abstract}
  For all positive integers $n$ we construct a 1--parameter family of
  conformal tori of revolution in the 3--sphere with
  $n$ bulges. These tori arise by Darboux transformations of constant
  mean curvature tori in the 3--sphere but do not have constant mean
  curvature in $S^3$.
\end{abstract}

\maketitle

\section{Introduction}

In a recent paper \cite{conformal_tori} it is shown that the
multiplier spectral curve of a conformal torus $f: T^2\to S^4$ is
essentially given by the set of closed Darboux transforms of $f$: to
each multiplier on the spectral curve there exists a quaternionic
holomorphic section with the given multiplier in the associated
quaternionic holomorphic line bundle $W$ of $f$. The prolongation of
the holomorphic section defines a new conformal torus $\hat f$, and it
turns out that $f$ and $\hat f$ satisfy a ``weak enveloping''
condition. Thus $(f, \hat f)$ form a generalized
Darboux pair: Classically, the Darboux transformation is defined for
isothermic surfaces and a map $\hat f: M \to \R^3$ is called a
\emph{classical Darboux transform} \cite{darboux} of an isothermic $f:
M \to \R^3$ if there exists a sphere congruence enveloping both $f$
and $\hat f$.

For every conformal torus the set $H^0_h(W)$ of holomorphic sections
with a given multiplier $h$ is generically 1--dimensional, and at
generic points the Darboux transformation preserves geometric
properties: e.g., generic Darboux transforms of a constant mean curvature
torus have constant mean curvature \cite{cmc_tori}, and
generic Darboux transforms of a Hamiltonian stationary torus are  
Hamiltonian stationary \cite{hsl}.

However, there exist examples, e.g. \cite{hsl}, of conformal tori which allow
non--trivial multiplier on the spectral curve with high dimensional
space of holomorphic sections. The existence of these singular
multipliers should allow a deformation of the spectral curve: in the
case of constant mean curvature tori in the 3--sphere of spectral genus zero
\cite{lawson_conjecture} one can deform the spectral curve to obtain a
family of Delaunay tori by removing this singularity of the spectral
curve, and thus by adding geometric genus. By contrast the Darboux
transformation preserves the geometric spectral genus
\cite{conformal_tori} in the case when the Darboux transform is
immersed but it may change geometric properties (e.g. break the
constant mean curvature condition). In particular, the Darboux
transformation at singular points is expected to allow to add or
remove arithmetic genus of the spectral curve, and a thorough
understanding of the singular points of the multiplier spectral curve
may play an important role in understanding the reconstruction of
conformal tori by their spectral data \cite{hitchin, pin&ster, Bob,
  QKP} and the study of minimum energy tori in presence of a
variational principle \cite{schmidt, lawson_conjecture}.

In this short note, we concentrate on the geometric properties of the
Darboux transformation in the case when the conformal torus is a
rectangular torus in the 3--sphere: in particular, we construct for
each $n\in \N, n\ge 2,$ a 1--parameter family of conformal tori of
revolution in $S^3$ with $n$ bulges which do not have constant mean
curvature in $S^3$.  Using a similar argument we also construct for
each $a\in\R, a>0,$ a 1--parameter family of cylinder of revolution
with non--constant mean curvature.

{\bf Acknowledgments.}  The author would like to thank
U. Hertrich--Jeromin, F. Pedit and N. Schmitt for fruitful discussions
during the preparation of this work.



\section{Rectangular tori}

In the following, we will apply the Darboux transformation on
rectangular tori $f: \C/\Gamma \to S^3 \subset\R^4$ with lattice
$\Gamma = \frac{1}{u} \Z \oplus \frac{i}{v} \Z$, $u, v>0$, to
construct the families of conformal tori of revolution. We identify
Euclidean 4--space $\R^4=\H$ with the quaternions and parametrize a
\emph{rectangular torus with parameters} $(u,v)$ by
\[
f(x,y) = \frac{uv}{u^2+v^2}\left( \frac{1}{u} e^{2\pi j u x} + i \frac 1{v} e^{2\pi j v y}\right)\,.
\]
In particular, we will use the fact that a rectangular torus $f:
T^2\to S^3$ is Hamiltonian stationary, and use the methods and
settings developed in \cite{hsl} to compute the Darboux transforms of
$f$.  We can write $f$ as
\[
f(x,y) =  \ejbh\left(\frac 1{u} + i \frac 1{v}\right)\left(-\frac j{2\pi}\right)g\,,
\]
where 
the
so--called \emph{Lagrangian angle} $\beta$ is given by
\[
\beta(z)= 2\pi\langle \beta_0, z\rangle \quad \text{ with } \quad
\beta_0 = u-v i\in\Gamma^* = u\Z \oplus v \Z\,,
\]
and
\[
g= 2\pi \rho j e^{\pi j(u x+v y)}\,, 
\]

with scale $\rho = \frac{uv}{\sqrt{u^2 + v^2}}\in\R$. Moreover, 
The derivative of $f$ can be written as
\[
df =  2\pi \rho e^{\pi j(u x- v y)}\, dz\, j e^{\pi j(u x+v y)} = \ejbh dz g\,,
\]
and $f$ is a conformal immersion, that is \cite[Sec. 2.2]{coimbra}
\[
*df = Ndf = -df R\,.
\]
with left normal
\[
N = e^{j\beta}i = e^{2\pi j(ux - vy)}i
\]
and right normal
\[
R = -g\invers i g= i e^{2\pi j(ux + vy)}\,.
\]
For every conformal immersion $f: M \to \H$ with right and left
normals $R$ and $N$ the normal bundle of $f$ is given \cite[Sec 2.2]{coimbra}
by
\[
\perp_f = \{ x\in\H \mid N x R = - x\}\,.
\]
In particular, if $f: M\to S^3\subset\H$ is a conformal map into the
3--sphere, then $n=Nf = fR$ and $f$ are unit normals. Thus, the second
fundamental form $\II_\H$ of $f$ as a map into $\H$ computes with
\[
(X df(Y))^\perp= -<df(Y), dn(X)> n - <df(Y), df(X)>f
\] 
as
\[
\II_\H = \II_{S^3} - |df|^2f\,.
\]
From this we see that the mean curvature vector $\Hh_\H$ in $\R^4=\H$
relates to the mean curvature vector $\Hh_{S^3} = H_{S^3}n$ via
\[
\Hh_\H = H_{S^3} n - f
\]
where $H_{S^3}$ is the mean curvature of $f$ in $S^3$. We denote by
\[
(dN)' = \frac 12(dN - N*dN) \quad \text{ and } \quad (dN)'' = \frac 12(dN + N *dN)
\]
the $(1,0)$ and $(0,1)$--parts of the derivative of $N$ with respect
to the complex structure $N$, and define $H$ by $(dN)' = -df H$.  Then
it is shown in \cite[Sec. 7.2]{coimbra} that the mean curvature vector
of a conformal immersion into $\R^4$ is given by
\begin{equation}
\label{eq:meancurvaturevector}
\Hh_\H = N\bar H\,.
\end{equation}
Combining the previous equations, we see that the mean
curvature $H_{S^3}$ of a conformal immersion $f: M \to S^3$ of a
Riemann surface $M$ into $S^3$ is given by
\begin{equation}
\label{eq:H in S3}
H_{S^3} = f H + N =\Re(fH)\,.
\end{equation}
In particular, since $H$ computes in the case of Hamiltonian
stationary Lagrangians \cite{hsl} to
\[
H = \pi g\invers \bar \beta_0 e^{j\frac{\beta_0}2} k
\]
the constant mean curvature of a rectangular torus in $S^3$ is given
by 
\[
H_{S^3} = \frac 12(\frac uv - \frac vu)\,.
\]
 

\section{The Darboux transformation}
\label{sec:Darboux}

We will briefly recall the construction of Darboux transforms in the
case when $f: M \to \R^4$ is a conformal immersion from a Riemann
surface into Euclidean 4--space. For the general case of conformal
immersions into the 4--sphere and details of the construction compare
\cite{conformal_tori}.  In our situation, the associated quaternionic
holomorphic line bundle of the immersion $f$ can be identified with
the trivial quaternionic bundle  $\trivial{}$ equipped with the (quaternionic)
\emph{holomorphic structure} $D$ given by
\[ 
D \alpha := \frac 12(d\alpha + N*d\alpha)\,,
\]
for $\alpha\in\Gamma(\trivial{})$ where $N$ is the left normal of $f:
M \to \R^4=\H$. We denote by $H^0(\trivial{}) = \ker D$ the set of
holomorphic sections of the holomorphic line bundle $(\trivial{}, D)$.  The
\emph{prolongation} of a local holomorphic section $\alpha\in
H^0(\trivial{})$ is given by the local section
\begin{equation}
\label{eq:prolongation}
\psi = \begin{pmatrix} f \nu + \alpha\\ \nu
\end{pmatrix} \in\Gamma(\trivial{2})
\end{equation}
of the trivial $\H^2$ bundle where $\nu$ is defined by $d\alpha = - df
\nu$. Then $\psi$ spans locally a quaternionic line bundle $\hat L$,
and if $d\alpha$ is nowhere vanishing, the corresponding map $\hat f =
f + \alpha\nu\invers$ is a branched conformal immersion into $\R^4$, a
so--called a \emph{Darboux transform} of $f$.  If we denote by $T =
\hat f- f$, then the derivative of $\hat f$ is given away from the
zeros of $\alpha$ by
\[
d\hat f = - T d\nu\alpha\invers T\,.
\]
From $d\alpha = - df\nu$ we see that $df\wedge d\nu=0$, in other
words, $*d\nu = - R d\nu$ since $f$ has right normal $R$. In
particular, $\hat f$ has left normal
\begin{equation}
\label{eq:hatN}
\hat N = -T  R T\invers\,.
\end{equation}
 To compute the mean curvature
vector $\hat\Hh$ of the Darboux transform $\hat f$, it remains
(\ref{eq:meancurvaturevector}) to compute $\hat H$ using the defining
equation $(d\hat N)' = - d \hat f\hat H$. To this end, note that the
derivative of $\hat N$ computes with $\hat f = f +T$ as
\[
d\hat N = -*dfT\invers + d\hat f T\invers\hat N - TdR T\invers + \hat
N df T\invers -* d\hat f T\invers
\]
so that the $(1,0)$-part of $d\hat N$ with respect to $\hat N$ is
given by
\begin{equation}
\label{eq:dhatN10}
(d\hat N)' = d\hat f T\invers \hat N - T(dR)''T\invers - *d\hat f T\invers
\end{equation}
where $(dR)'' = \frac 12(dR +R*dR)$.

To obtain Darboux transforms which are globally defined, we consider
holomorphic sections with multiplier, that is holomorphic sections of
the trivial bundle $\tilde{\trivial{}}$ over the universal cover
$\tilde M$ of $M$ which satisfy
\[
\gamma^*\alpha = \alpha h_\gamma
\]
with $h_\gamma\in\C_*$ for all $\gamma\in \pi_1(M)$. From $d\alpha =
-df \nu$ and (\ref{eq:prolongation}) we see that the prolongation
$\psi$ of $\alpha$ has multiplier $h_\gamma$, that is $\gamma^*\psi =
\psi h_\gamma$ for $\gamma\in\pi_1(M)$, so that $\psi$ defines a
branched conformal immersion $\hat f: M \to \H$ if $\alpha$ is nowhere
vanishing.

In the case when $f: T^2 \to S^4$ is a conformal 2--torus, the
existence of global Darboux transforms is guaranteed by the link
\cite{conformal_tori} between Darboux transforms and the
\emph{multiplier spectral curve} $\Sigma$ of $f$: to every multiplier
$h\in\Sigma$ there exists at least one holomorphic section with
multiplier $h$, and each such holomorphic section gives by
prolongation a Darboux transform $\hat f: T^2 \to S^4$ of $f$. In
other words, there is at least a Riemann surface worth of Darboux
transforms of a conformal torus.

\section{Darboux transforms of Hamiltonian stationary Lagrangian tori}

In the following we summarize notations and results of \cite{hsl}. In
the case of an Hamiltonian stationary Lagrangian torus $f:
\C/\Gamma\to \R^4$ with Lagrangian angle $\beta =2\pi \langle \beta_0,
\cdot \rangle$, every multiplier of a holomorphic section is of the
form
\[
h = h^{A,B} = e^{2\pi(\langle A, \cdot\rangle -i\langle B, \cdot
  \rangle)}
\]
with $A,B\in\C^2$ such that 
\[
\Gamma^*_{A,B} = \{\delta\in\Gamma^*+\frac{\beta_0}2 \mid \delta \text{ satisfies }
  | \delta - B|^2 - |A|^2 = \frac{| \beta_0 |^2}{4}   
\text{ and }  \langle \delta-B, A\rangle =0
\}
\]
is not empty.  A holomorphic section $\alpha\in H^0(\widetilde{V/L})$
with multiplier $h^{A,B}$ is called \emph{monochromatic} if it is
given by a Fourier monomial, that is if  
\begin{equation}
\label{eq:monochromatic holomorphic section}
\alpha = \alpha_\delta:=\ejbh (1-k\lambda_\delta)  e_{\delta-B} e^{2\pi\langle A, \cdot\rangle}
\end{equation}
is given by a single frequency $\delta\in\Gamma^*_{A,B}$ where $
\lambda_\delta \assign \frac{2}{\beta_0}(\delta-i A - B)$ and
\[
e_\gamma(z):= e^{2\pi i
  \langle\gamma,z\rangle}\,.
\]
A \emph{polychromatic} holomorphic section with multiplier $h^{A,B}$
is given by a non--trivial linear combination $\alpha
=\sum_{\delta\in\Gamma^*_{A,B}} m_\delta \alpha_\delta$ of
monochromatic holomorphic sections, $m_\alpha\in\C$.

\begin{definition}
  A branched conformal immersion $\hat f: M \to S^4$ is called a
  \emph{monochromatic Darboux transform} (respectively
  \emph{polychromatic}) if it is given by the prolongation of a
  monochromatic (respectively polychromatic) holomorphic section.
\end{definition}

In \cite{hsl} it is shown that all monochromatic Darboux transforms of
a rectangular torus are after reparametrization again a rectangular
torus. Moreover, polychromatic holomorphic sections with multiplier
$h^{A,B}, A\not=0$, only occur if $h^{A,B}$ is real. In particular,
the corresponding Darboux transform coincides with a monochromatic
Darboux transform. To obtain new tori we therefore have to consider
polychromatic Darboux transforms with $A=0$:

\begin{theorem}[\cite{hsl}]
\label{thm:Darboux transforms}
Let $f: \C/\Gamma\to\R^4$ be a Hamiltonian stationary torus in $\R^4$
with Lagrangian angle $\beta$ and $df = \ejbh dz g$. Then every
non--constant, polychromatic Darboux transform $\hat f: \C/\Gamma\to\R^4$
 of  $f$  with $A=0$ is given by
\begin{equation}
\label{eq: multiple darboux}
\hat f = f + \ejbh \left(\sum_{s,t\in I_B} (1+k e^{is}) m_s \bar m_t e_{\frac{\beta_0}2(e^{it} -e^{is})} (1 + ke^{it})  \sin t\right) \frac{1}{R \pi \bar\beta_0} g
\end{equation}
where   the finite set
\[
I_B=\{ t\in [0, 2\pi) \mid B-\frac{\beta_0}2e^{it}\in\Gamma^*_{0,B}\}\not=\{0,\pi\}
\]
parametrizes the admissible frequencies and $m_t\in \C$ are chosen so
that the map
\[
R =|\sum_{t\in I_B} m_t \sin t \, e_{B-\frac{\beta_0}2e^{it}}|^2 +
|\sum_{t\in I_B} m_t e^{it}\sin t\, e_{B-\frac{\beta_0}2e^{it}}|^2
\]
is nowhere vanishing.   
\end{theorem}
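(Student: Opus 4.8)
The plan is to reduce the spectral data to an explicit $e^{it}$--parametrisation, determine the prolongation datum $\nu$, and then read off $\hat f = f + \alpha\nu\invers$ by expanding the resulting quaternionic products.

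First I would specialise the monochromatic sections \eqref{eq:monochromatic holomorphic section} to $A=0$. The spectral condition then collapses to $|\delta-B|^2=\tfrac14|\beta_0|^2$, so the admissible $\delta$ sweep out the circle of radius $\tfrac12|\beta_0|$ about $B$; writing $\delta = B-\tfrac{\beta_0}2 e^{it}$ parametrises them by the finite set $I_B$. With $A=0$ one gets $\lambda_\delta=\tfrac2{\beta_0}(\delta-B)=-e^{it}$, hence $1-k\lambda_\delta = 1+ke^{it}$ and $e_{\delta-B}=e_{-\frac{\beta_0}2 e^{it}}$, so each monochromatic section becomes
\[
\alpha_{\delta_t}=\ejbh\,(1+ke^{it})\,e_{-\frac{\beta_0}2 e^{it}},
\]
and the polychromatic section is the combination $\alpha=\sum_{t\in I_B}m_t\,\alpha_{\delta_t}$.

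The crux is the single--frequency prolongation. I would differentiate $\alpha_{\delta_t}$ using $d\beta=\pi(\bar\beta_0\,dz+\beta_0\,d\bar z)$ and $d e_\gamma = 2\pi i\,\Re(\bar\gamma\,dz)\,e_\gamma$, then move $\ejbh$ to the left and $g$ to the right of $df=\ejbh\,dz\,g$ via the anticommutation rules $i\,\ejbh=\emjbh\,i$ and $j\,\bar\beta_0=\beta_0\,j$. Solving $d\alpha_{\delta_t}=-df\,\nu_{\delta_t}$ for the function $\nu_{\delta_t}$, the coefficient of $dz$ yields the scalar $\sin t$ together with the purely imaginary quaternion $ke^{it}$, so that $\nu_{\delta_t}$ is, up to the universal constant $\pi\bar\beta_0$, of the form $\sin t\,(1-ke^{it})\,e_{-\frac{\beta_0}2 e^{it}}\,g\invers$ (note $\overline{1+ke^{it}}=1-ke^{it}$, since $ke^{it}=j\sin t+k\cos t$ is imaginary). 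I expect this to be the main obstacle: it demands care with the complex coefficients $m_t$, whose position is fixed by the right $\H$--module structure of holomorphic sections, and above all it requires that the two $d\bar z$--contributions of the Leibniz rule cancel — this cancellation is precisely the integrability $*d\nu=-R\,d\nu$ recorded in Section~\ref{sec:Darboux}, and it is what makes $\nu$ a genuine function rather than a $1$--form.

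Finally I would assemble $\hat f = f+\alpha\nu\invers = f+\alpha\,\bar\nu\,|\nu|^{-2}$. Expanding $\alpha=\sum_s m_s\alpha_{\delta_s}$ against $\bar\nu=\sum_t\bar\nu_{\delta_t}\,\bar m_t$ gives the double sum $\sum_{s,t\in I_B}$, in which the frequencies combine by $\overline{e_\gamma}=e_{-\gamma}$ into $e_{-\frac{\beta_0}2 e^{is}}\,\overline{e_{-\frac{\beta_0}2 e^{it}}}=e_{\frac{\beta_0}2(e^{it}-e^{is})}$, the factor of $\alpha_{\delta_s}$ contributes $(1+ke^{is})$ with coefficient $m_s$, and the conjugate factor of $\bar\nu_{\delta_t}$ contributes $(1+ke^{it})\sin t$ with coefficient $\bar m_t$; arranging these as dictated by the non--commutative products, together with the outer $\ejbh$, the trailing $g$ and the constant $\tfrac1{\pi\bar\beta_0}$, this is exactly \eqref{eq: multiple darboux}. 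The denominator is identified by writing $\nu=\nu_1+\nu_2 j$ in the splitting $\H=\C\oplus\C j$, whence $|\nu|^2=|\nu_1|^2+|\nu_2|^2$; the two summands are the two squared sums displayed for the nowhere--vanishing function $R$, so $R=|\nu|^2$ up to the same universal constant, and its non--vanishing is exactly the condition that $\hat f$ be a branched conformal immersion.
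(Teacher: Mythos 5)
The first thing to note is that the paper contains no proof of this statement: Theorem~\ref{thm:Darboux transforms} is imported verbatim from \cite{hsl}, so your proposal can only be checked against the machinery of Sections~3 and~4 and by direct computation. Measured that way, your route is the right one, and it does work. Specialising \eqref{eq:monochromatic holomorphic section} to $A=0$, parametrising the spectral circle by $\delta=B-\frac{\beta_0}2e^{it}$ so that $\lambda_\delta=-e^{it}$, and differentiating $\alpha_{\delta_t}=\ejbh(1+ke^{it})e_{\gamma_t}$, $\gamma_t=-\frac{\beta_0}2e^{it}$, with your rules $d\beta=\pi(\bar\beta_0\,dz+\beta_0\,d\bar z)$ and $de_\gamma=\pi i(\bar\gamma\,dz+\gamma\,d\bar z)e_\gamma$, one finds
\[
d\alpha_{\delta_t}=-\ejbh\, dz\,\pi\bar\beta_0\sin t\,(1-ke^{it})\,e_{\gamma_t}\,,
\]
the unwanted $dz$--coefficient of the $j$--part being $\frac{\bar\beta_0}2(1-|e^{it}|^2)=0$. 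Hence $\nu=g\invers\pi\bar\beta_0\,w$ with $w=\sum_t(1-ke^{it})m_t\sin t\,e_{\gamma_t}$, and $\alpha\nu\invers=\alpha\,\bar w\,|w|^{-2}\frac1{\pi\bar\beta_0}\,g$ expands, exactly as you describe, into the double sum of \eqref{eq: multiple darboux} with $R=|w|^2$ recovered from the splitting of $w$ into its $\C$-- and $\C k$--parts (the unimodular factor $e_B$ accounting for the shift to $e_{B-\frac{\beta_0}2e^{it}}$ in the displayed $R$).

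Two local corrections, though. First, you place $g\invers$ on the wrong side: since $-df\,\nu=-\ejbh\,dz\,g\nu$ and $dz$ commutes with the complex constant $\pi\bar\beta_0\sin t$, one gets $\nu_{\delta_t}=g\invers\,\pi\bar\beta_0\sin t\,(1-ke^{it})\,e_{-\frac{\beta_0}2e^{it}}$, with $g\invers$ and the constant on the \emph{left}. This is not cosmetic, because $g$ does not commute with complex scalars; and only the left placement is compatible with your own final assembly, since conjugation then puts $\frac1{\pi\bar\beta_0}g$ at the trailing end as in \eqref{eq: multiple darboux}, whereas your form $\sin t\,(1-ke^{it})e_{-\frac{\beta_0}2e^{it}}g\invers$ would leave $\bar g\invers$ stranded in the middle of $\alpha\bar\nu$. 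Second, you misattribute the crucial cancellation: the relation $*d\nu=-R\,d\nu$ of Section~\ref{sec:Darboux} is \emph{derived} from $d\alpha=-df\,\nu$ (via $df\wedge d\nu=0$), so it cannot be what guarantees the existence of $\nu$. What kills the spurious term is, as computed above, precisely $|e^{it}|=1$, i.e.\ the spectral condition $|\delta-B|=\frac{|\beta_0|}2$ defining $\Gamma^*_{0,B}$ \emdash{} equivalently the holomorphicity of $\alpha_{\delta_t}$, which is what makes $d\alpha=-df\,\nu$ pointwise solvable. A last small point you leave untouched: for $t\in\{0,\pi\}$ the factor $\sin t$ vanishes, so these frequencies contribute nothing to $\nu$; if $I_B=\{0,\pi\}$ then $\nu\equiv0$ and the prolongation yields a constant $\hat f$, which is exactly why the theorem assumes $I_B\neq\{0,\pi\}$ and restricts to non--constant Darboux transforms.
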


\section{Families of conformal tori of revolution in $S^3$}

In this section we discuss the Darboux transforms of a rectangular
torus $f: T^2\to S^3$. If the parameters $(u,v)$ of $f$ satisfy $u \ge
\sqrt{3} v$, we show that there exist polychromatic Darboux transforms
which are conformal tori in the 3--sphere. 

Fix $n\in\N, n\ge 2,$ and consider all rectangular tori $f$ in the
3--sphere with parameters $(u,v)$ satisfying $u^2 + v^2(1-n^2)\ge 0$. 
The multiplier $h=h^{0,B}$ with
\[
B=\frac{\beta_0}2
+\frac{nv i}2 - \frac{\sqrt{u^2 + v^2(1-n^2)}}2\,.
\] 
has $\dim H^0_{0,B} = |\Gamma^*_{0,B}|\ge 2$ since
\[
\delta_+ =\frac{\beta_0}2, \quad \delta_- =\frac{\beta_0}2 + nvi \in\Gamma^*_{0,B} =\{ \delta\in\Gamma^2 +\frac{\beta_0}2 \mid |\delta-B|^2 =\frac{|\beta_0|^2}4\}\,,
\]
which shows that $f$ allows polychromatic Darboux transforms. In
particular, we see that
 \[
-\lambda_{\delta_\pm} = \frac{2}{\beta_0}(B-\delta_\pm) = 
c_\pm + i s_\pm\]
 with
\begin{equation}
\label{eq:cspm}
c_\pm = \frac{\mp nv^2 - us}{r^2}, \quad \quad  s_\pm = \frac{\pm uvn -sv}{r^2} 
\end{equation}
and
\[
s :=  \sqrt{r^2 -v^2n^2}
\quad \text{ where } \quad
r = |\beta_0|= \sqrt{ u^2 + v^2}
\]
so that \eqref{eq:monochromatic holomorphic section} all monochromatic holomorphic sections with multiplier $h^{0,B}$ are given by 
\[
\alpha_{\pm} := \alpha_{\delta_\pm } =  e^{\pi j(u x -vy)}\left(1 + j s_\pm + k c_\pm\right) e^{\pi i(s x \mp nv y)}\,.
\]
Using Theorem \ref{thm:Darboux transforms} we see that the
corresponding polychromatic Darboux transforms are given by
\[
\hat f = \ejbh\left(-\left(\frac j{u} + \frac k{v}\right)
 + (\sum_{i,j\in\{\pm\}} a_{ij}) \frac{u - iv}{R r^2} \right) \frac{g}{2\pi} 
\]
where   
\begin{equation}
\label{eq:aij}
a_{ij} =  (1+k e^{it_i}) m_i \bar m_j e^{2\pi i\langle(\delta_i-\delta_j),\cdot\rangle} (1 + ke^{it_j})  \sin t_j\,,
\end{equation}
and $m_\pm\in\C$ have to be chosen so that
\begin{eqnarray*}
2R(y) &=& |m_+s_+ + m_-s_-e^{i\tilde y}|^2  + \left|m_+s_+e^{it_+} + 
m_- s_-e^{it_-}e^{i\tilde y}\right|^2
\end{eqnarray*}
is nowhere vanishing where 
\[
\tilde y := 2\pi nvy\,.
\]
Using (\ref{eq:cspm}) and $|z+w|^2 = |z|^2 +|w|^2 + 2\Re(z\bar w)$ the
denominatior $R$ simplifies to
\begin{eqnarray*}
R &=& |m_+|^2s_+^2 + |m_-|^2 s_-^2  + \frac{2v^2(1-n^2)}{r^4} \Re\left(m_+\bar m_- (r^2- n^2v^2- is vn)e^{-i\tilde y}\right)\,.
\end{eqnarray*}

In general, the above Darboux transforms will be conformal immersions
into the 4--sphere. However, there exist constants $m_+, m_-\in\C_*$
so that the polychromatic Darboux transform is a conformal immersion
into the 3--sphere.

\begin{theorem}
  For each $n\in\N, n\ge 2,$ there exists a 1--parameter family of
  conformal tori of revolution in the 3--sphere with $n$ bulges. Only
  one conformal torus in this family has constant mean curvature in
  $S^3$.
\end{theorem}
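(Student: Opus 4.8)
The plan is to exploit the very explicit shape of $\hat f$ in \eqref{eq: multiple darboux} to push all three assertions down to the single profile variable $y$, and then read them off from the harmonics of $\tilde y=2\pi nvy$.

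\textbf{Surface of revolution and $n$ bulges.} First I would observe that the only $x$--dependence of $\hat f$ enters through the two outer exponentials: since $\delta_--\delta_+=nvi$, the coefficients $a_{ij}$ of \eqref{eq:aij} and the denominator $R$ depend on $y$ alone, so $\hat f=\ejbh\,P(y)\,\tfrac{g}{2\pi}$ with $P(y)=-(\tfrac ju+\tfrac kv)+\big(\sum_{i,j}a_{ij}\big)\tfrac{u-iv}{Rr^2}$. Commuting $e^{j\pi ux}$ past $j$ and $e^{j\pi vy}$ brings this to the normal form $\hat f(x,y)=e^{j\pi ux}\,Q(y)\,e^{j\pi ux}$ with $Q(y)=\rho\,e^{-j\pi vy}\,P(y)\,j\,e^{j\pi vy}$. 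For fixed $x$ the map $q\mapsto e^{j\pi ux}q\,e^{j\pi ux}$ is the identity on $\Span(i,k)$ and a rotation by $2\pi ux$ on $\Span(1,j)$, and these compose to a one--parameter group as $x$ varies; hence the $x$--curves are orbits of a fixed rotation and $\hat f$ is a surface of revolution about the axis plane $\Span(i,k)$. Since $P$ is $\tfrac1v$--periodic in $y$ (it depends on $y$ only through $e^{\pm i\tilde y}$ and $R$), so is $Q$, and as $y$ runs through one period $\tilde y$ sweeps $n$ full turns; this is what produces the $n$ bulges.

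\textbf{Landing in $S^3$ and the one--parameter family.} Writing $N=\sum_{i,j}a_{ij}$ and $w=\tfrac ju+\tfrac kv$, a short computation shows that the constant part $-\ejbh\,w\,\tfrac{g}{2\pi}$ of $\hat f$ reproduces $f$ itself, so that $\hat f=f+\ejbh\,N\,\tfrac{u-iv}{Rr^2}\,\tfrac{g}{2\pi}$. As the outer factors are unit quaternions, $\hat f$ maps into the same round $S^3$ as $f$, i.e. $|\hat f|=|f|$, if and only if
\[
|N(y)|^2=2R(y)\,\Re\!\big[\bar w\,N(y)\,(u-iv)\big]\qquad\text{for all }y .
\]
Using \eqref{eq:cspm} and $ke^{it_\pm}=j\sin t_\pm+k\cos t_\pm$, both $N$ and $R$ carry only the harmonics $1,e^{\pm i\tilde y}$, so this identity expands into finitely many real equations in the harmonics $1,e^{\pm i\tilde y},e^{\pm 2i\tilde y}$; after cancelling the common scale of $m_\pm$ (under which $a_{ij}/R$ is invariant) these become equations in the single ratio $m_-/m_+\in\C_*$. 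The main obstacle of the proof lies precisely here: I expect the algebraic structure of the monochromatic sections $\alpha_\pm$ to force all but one of these harmonic identities to hold automatically, the surviving one cutting out a one--parameter family of admissible ratios; together with the nonvanishing of $R$ (so that $\hat f$ is an immersion) this yields the asserted $1$--parameter family of conformal tori of revolution with $n$ bulges.

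\textbf{The unique CMC member.} Finally I would compute the mean curvature using \eqref{eq:hatN} and \eqref{eq:dhatN10}: solving $(d\hat N)'=-d\hat f\,\hat H$ for $\hat H$ and substituting into $H_{S^3}=\Re(\hat f\hat H)$ from \eqref{eq:H in S3}. By the rotational symmetry established above, $H_{S^3}$ is a function of $y$ alone, in fact a rational expression in $e^{\pm i\tilde y}$ whose coefficients depend polynomially on the family parameter $m_-/m_+$. The torus is CMC in $S^3$ exactly when every non--constant $\tilde y$--harmonic of $H_{S^3}$ vanishes; since these vanishing conditions are again polynomial in the single parameter, I expect them to be simultaneously satisfiable for precisely one admissible value of $m_-/m_+$, which gives the unique constant mean curvature torus. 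The anticipated difficulty in this last step is purely organisational---$\hat H$ is assembled from $T=\hat f-f$, its derivative, and $R$, so one must track the quaternionic ordering carefully---but once $H_{S^3}(y)$ is in harmonic form the uniqueness reduces to a single polynomial count.
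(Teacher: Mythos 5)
Your structural observations are sound---the commutation argument showing $\hat f(x,y)=e^{j\pi ux}Q(y)e^{j\pi ux}$ with $Q$ depending only on $y$ is correct and matches the paper's decomposition $\hat f = e^{2j\pi ux}\kappa_0 + ie^{2j\pi vy}\kappa_1$---but there is a genuine gap at the heart of the argument: you have misidentified the parameter of the $1$--parameter family. You expect the $S^3$--condition to cut out a one--parameter family of admissible ratios $m_-/m_+\in\C_*$ for a \emph{fixed} rectangular torus, and you leave this as an expectation (``I expect the algebraic structure \ldots to force all but one of these harmonic identities to hold automatically''). This cannot work as stated: the common complex scale of $(m_+,m_-)$ leaves $\hat f$ unchanged, and the relative \emph{phase} of $m_-/m_+$ enters only through $m_+\bar m_-e^{-i\tilde y}$, i.e.\ it merely translates $y$. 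So even if your harmonic identities admitted a circle of solutions, the resulting tori would all be congruent---geometrically a single surface, not a family. The paper instead makes the explicit choice $m_+=m_-=m$, observes that $\hat f$ is independent of $m$, and verifies by direct computation that $|\hat\tau|^2 = \frac{1}{4\pi^2\rho^2}$, i.e.\ that this particular polychromatic transform lands in $S^3$; the $1$--parameter family of the theorem is then parametrized by the modulus $u\ge\sqrt{n^2-1}$ of the \emph{underlying} rectangular torus (after the normalization $v=1$), each member being a Darboux transform of a different torus.

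The same misidentification infects your last two steps. For the bulges, periodicity of $Q$ in $\tilde y$ alone does not produce bulges---at $u=v\sqrt{n^2-1}$ the transform is again a rectangular torus with constant profile---so one must actually locate the extrema of the profile, as the paper does by showing $\kappa_0$ has $2n$ critical points precisely when $u\ne v\sqrt{n^2-1}$. For uniqueness of the CMC member, the polynomial count you anticipate is over the wrong variable: with $m_\pm$ already fixed, the paper evaluates the explicit expression for $\hat H_{S^3}$ at the two points $y=0$ and $y=\frac{1}{2n}$ and shows that equality of these two values forces $u=v\sqrt{n^2-1}$ (equivalent to $4r^2(u^2+v^2(1-n^2))^2=0$), the boundary point of the family at which $\hat f$ is itself rectangular and hence CMC. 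So the uniqueness is a statement about the family parameter $u$, not about a distinguished ratio $m_-/m_+$; to repair your proof you would need to (a) actually verify the $S^3$--identity for some concrete choice of coefficients rather than postulate it, and (b) recognize that the deformation parameter is the conformal type of the source torus.
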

\begin{proof}
We consider the case that  $m_\pm = m$. In this case (\ref{eq:cspm})
\[
s_+^2 + s_-^2 = \frac{2v^2}{r^4} \left(u^2(1 + n^2) + v^2(1-n^2)\right)
\]
shows that
\[
\frac{Rr^4}{2|m|^2v^2}=  
u^2(1+n^2) + v^2(1-n^2) + (1-n^2)\left((r^2-n^2v^2)\cos(\tilde y) -svn\sin(\tilde y) \right)\,.
\]
From (\ref{eq:aij})
\[ 
a_{ij} =  |m|^2(1+k e^{it_i})  e^{2\pi i\langle(\delta_i-\delta_j),\cdot\rangle} (1 + ke^{it_j})  \sin t_j
\]
we see that $\hat f$ is independent of the choice of $m\in\C$. In particular,  we may assume from now on that  $m=1$.  Then
\[
a_{\pm, \pm } = 2ke^{it_\pm}s_\pm 
= 2(k c_\pm s_\pm + j s_\pm ^2)
\]
is independent of $z= x + iy$ while
\[
a_{\pm, \mp} = (1 + ke^{it_\pm})s_\mp e^{\mp i \tilde y} + 
(k e^{it_\mp} - e^{it_\mp - i t_\pm}) s_\mp e^{\pm i \tilde y}
\]
is independent of $x$.  In particular, the Darboux transform is given
by $\hat f = f + T$ where $T= \ejbh \tau g$, $\tau=\tau_0 + i \tau_1$
with
\begin{eqnarray*}
\tau_0(y) &=& \frac{jun^2}{\pi \hat R(y)}\in j\R\\
\tau_1(y) &=&\frac{ n(snv \cos\tilde y + s^2 \sin \tilde y)+  j(s^2+(s^2\cos\tilde y - svn \sin \tilde y))}{\pi v\hat R(y)}  \in\Cj = \Span\{1, j\}
\end{eqnarray*}
  and 
\[
\hat R(y) =  u^2(1+n^2) + v^2(1-n^2) + (1-n^2)(s^2\cos\tilde y - svn \sin \tilde y)\,.
\]

Writing $\hat f = \ejbh \hat \tau g$ where
\begin{equation}
\label{eq:hattau}
\hat \tau = \tau + \sigma, \quad \quad \sigma = -\frac{1}{2\pi}\left(\frac{j}{u} + \frac{k}{v}\right)\,,
\end{equation}
a lengthy, but straightforward, computation shows that 
\begin{equation}
\label{eq:length tau}
|\hat \tau|^2  = \frac{1}{4\pi^2\rho^2}\,.
\end{equation}
In other words, $\hat f: T^2\to S^3$ is a conformal map into the
3--sphere. On the other hand, we have
\[
\hat f = e^{j\pi(ux - vy)} (\tau_0 -\frac{j}{2\pi u} + i(\tau_1- \frac{j}{2\pi v})) 2\pi \rho j e^{j\pi(ux + vy)}
  = e^{2j\pi ux} \kappa_0 + i e^{2j\pi vy} \kappa_1
\]
where $\kappa_0= (2\pi\tau_0j + \frac 1{ u}) \rho $ is a real valued
function, and both $\kappa_0$ and $\kappa_1 = (2\pi\tau_1 j + \frac 1{
  v}) \rho$ only depend on $y$, that is, $\hat f$ is a surface
of revolution in $S^3$. Note that for $u = v\sqrt{n^2-1}$, the Darboux
transform $\hat f$ is a rectangular torus with constant mean curvature
in the 3--sphere. If $u\not= v\sqrt{n^2-1}$ then $\kappa_0$ is
extremal at
\[
y_k  = \frac 1{2nv}\left(\frac 1\pi \arctan(-\frac{vn}{\sqrt{u^2+v^2(1-n^2)}}) + k\right), \quad k=0,\ldots, 2n-1\,,
\]
in particular, $\hat f$ is a torus of revolution in $S^3$ with $n$
bulges.  
\begin{figure}
\includegraphics[height=4.2cm]{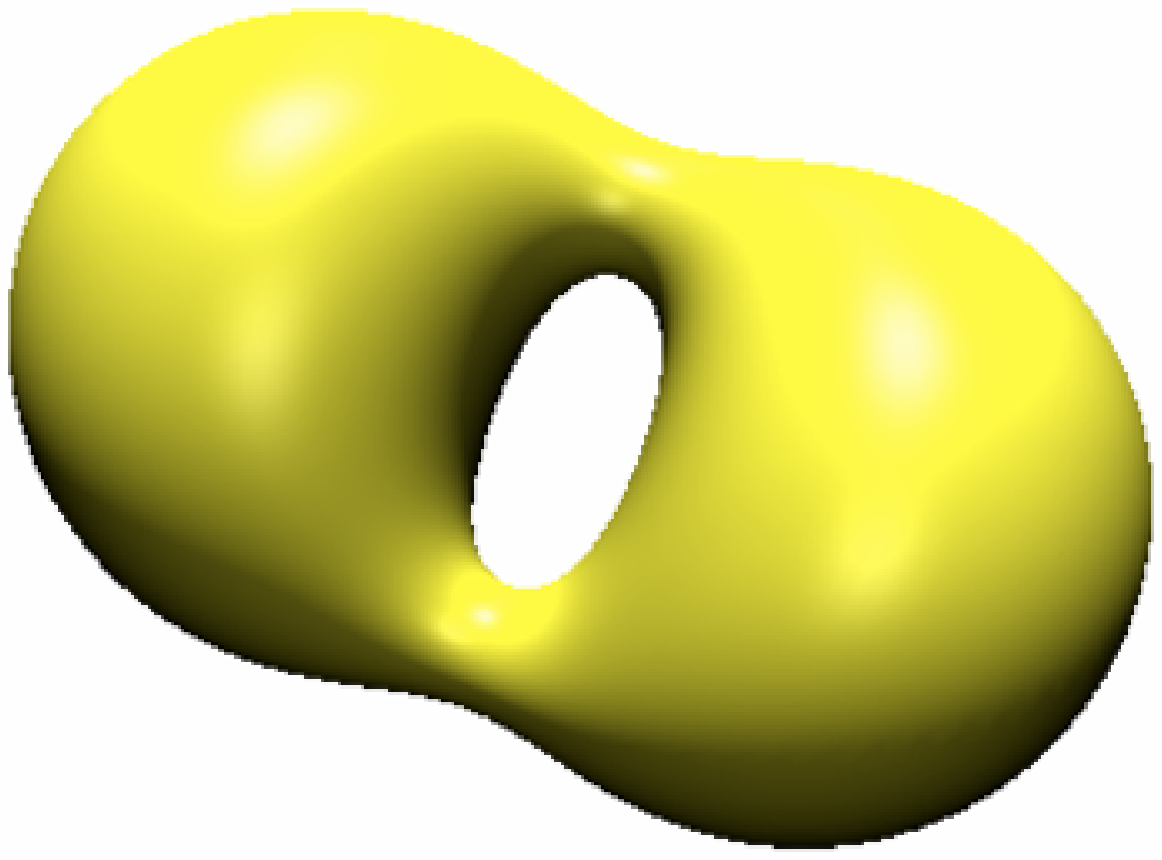}
\includegraphics[height=4.2cm]{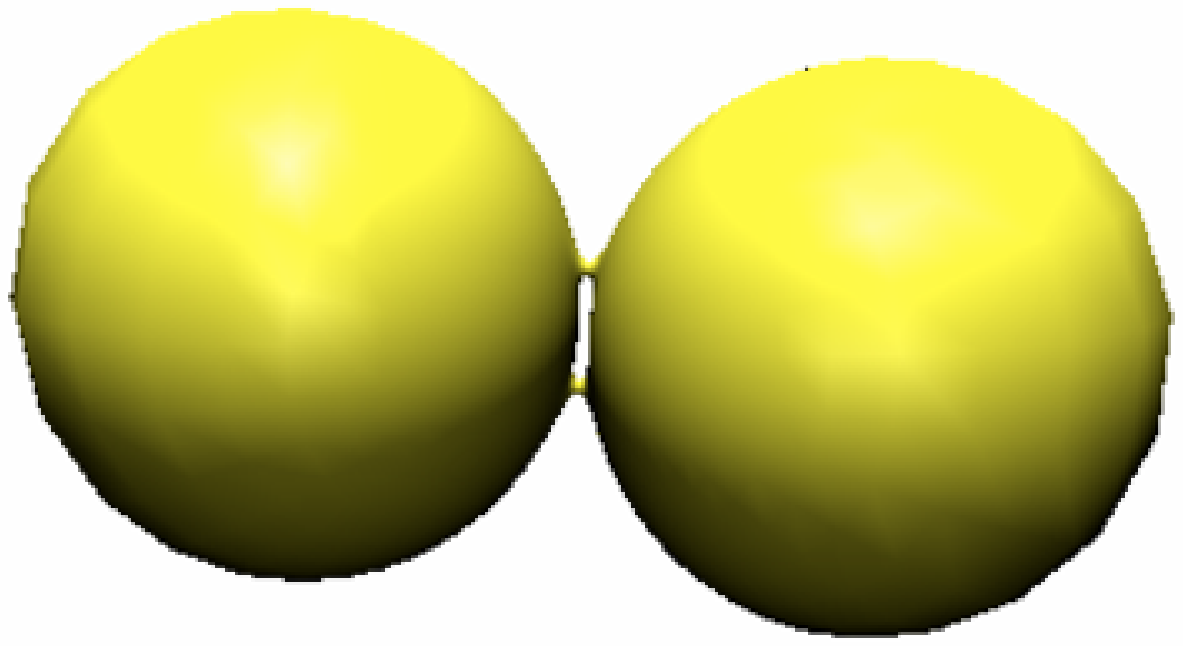}
\caption{Darboux transforms of rectangular tori with parameter (1.8,1) and (2.1, 1)}
\end{figure}
If $\hat f$ had constant mean curvature in $S^3$ and is not a
rectangular torus, we would $\hat f$ expect to be a Delaunay torus and
thus to be parametrized by elliptic functions. However, the Darboux
transformation essentially preserves spectral genus
\cite{conformal_tori}, and in our case $\hat f$ is parametrized by
(rational functions of) trigonometric functions. Indeed, we compute
the mean curvature $\hat H_{S^3} = \Re(\hat f\hat H)$ of $\hat f $ in
$S^3$ explicitly when $u\not=v\sqrt{n^2-1}$: first, $\hat H$ is
defined by $(d\hat N)' = - d\hat f \hat H$, that is (\ref{eq:dhatN10})
\[
\hat H 
= -T\invers\hat N + \hat f_x\invers(Tr_x + \hat N \hat f_x) T\invers
\]  
where $(dR)'' = r_x dx + r_y dy$ with
\[
r_x = \pi g\invers j(ui-v) g\,.
\]
\begin{figure}
\includegraphics[height=4.2cm]{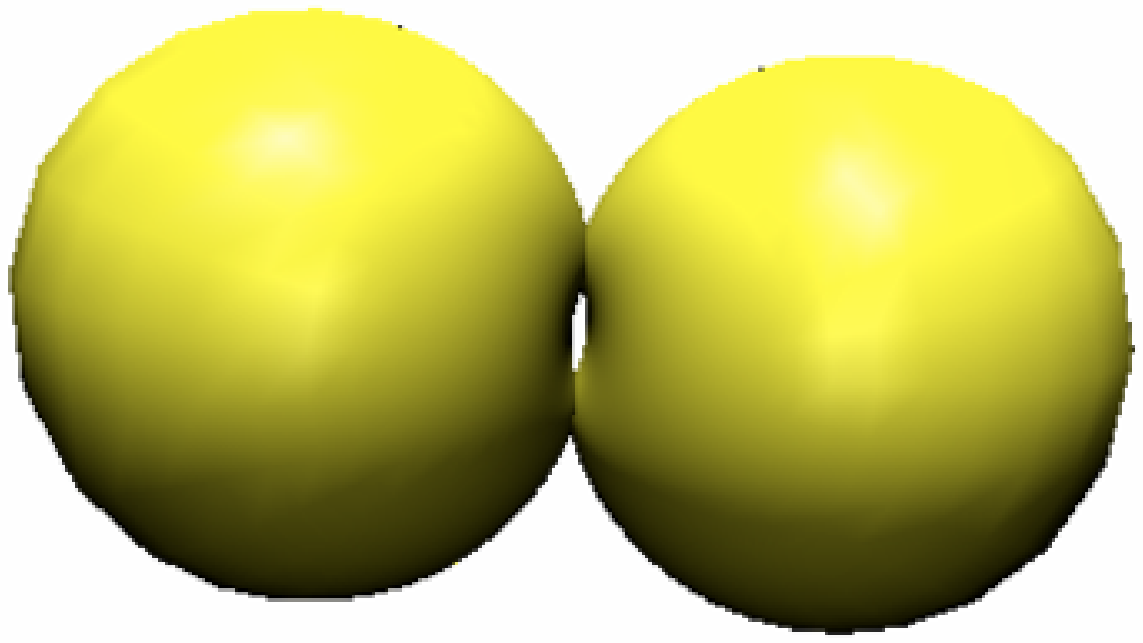}
\includegraphics[height=4.2cm]{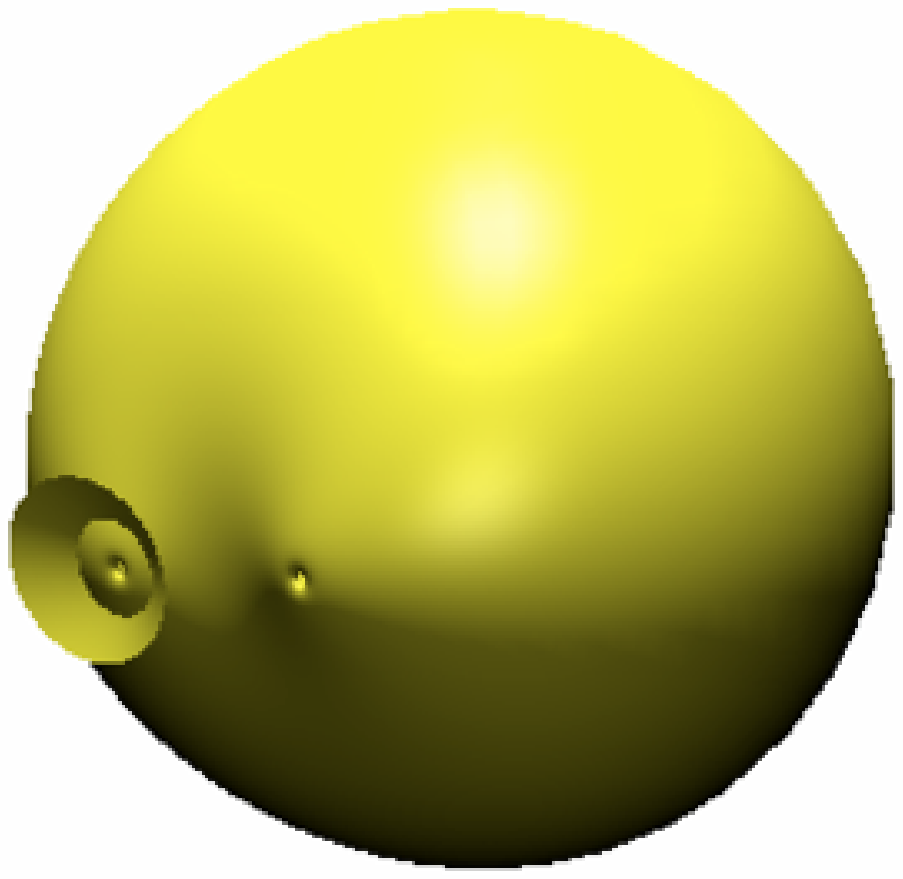}
\caption{Non--embedded Darboux transform of the rectangular torus with
  parameter (2.6, 1)}
\end{figure}
Moreover, we compute $\hat f_x = \ejbh 2\pi ju \hat\tau_0 g = \ejbh q
g$ with real valued
\[
q=2\pi ju\hat\tau_0 =1 -\frac{2u^2n^2}{\hat R}\,.
\]
The left normal of $\hat f$ is given by (\ref{eq:hatN})
 \[
 \hat N = \ejbh \tau i \tau\invers \emjbh 
\] 
so that
$ \hat f\hat H = \ejbh \hat\gamma \emjbh $ with $ \hat\gamma =
\hat\tau\gamma \tau\invers $ and
\[
\gamma = -i+ q\invers\pi \tau j (ui-v) + \tau i \tau\invers \,.
\]
Next observe that for any $z, w\in\H$ we have $\Re(z w z\invers) =\Re w$ which shows that 
\[
\Re(\gamma)  = q\invers \pi
\Im(v\tau_0 + u \tau_1)\,,
\]
and the mean curvature of $\hat f$ in $S^3$ is, using $\hat\tau = \tau
+ \sigma$, given by (\ref{eq:H in S3})
\begin{equation}
\label{eq:hat H}
\hat H_{S^3} = \Re(\sigma\gamma\tau\invers) + \frac \pi q \Im(v\tau_0 + u \tau_1)\,,
\end{equation}
Furthermore, for $\lambda =\lambda_0 + i \lambda_1\in \H, \lambda_0, \lambda_1\in\Cj$, we have with
$\bar\tau = -\tau$
\[
 \Re(\sigma \lambda \tau\invers)=  -\frac 1{2\pi |\tau|^2}\Im\left(\tau_0\left(\frac{\lambda_0}u + \frac{\lambda_1} v\right)+ \tau_1\left(\frac{\bar\lambda_0}v - \frac{\bar\lambda_1}u\right)\right)
\]
and we get for both $\lambda=-i$ and $\lambda= \tau i\tau\invers $ 
\[
\Re(\sigma \lambda \tau\invers) =\frac 1{2\pi |\tau|^2}\Im\left(\frac{\tau_0} v - \frac{\tau_1} u\right)\,.
\]
whereas for $\lambda=  q\invers \pi \tau j(ui-v)$
\[
\Re(\sigma \lambda \tau\invers) =\frac{1}{2q}(\frac vu + \frac uv)- \frac 1{uvq |\tau|^2}(v\Im \tau_0 + u \Im\tau_1)^2\,.
\]
A straightforward computation shows that 
\[
\pi  uv |\tau|^2 = \Im(v\tau_0 + u\tau_1)
\]
so that (\ref{eq:hat H}) simplifies to
\begin{equation}
\label{eq:hat H simplified}
\hat H_{S^3} =
\frac 1{\pi |\tau|^2}\Im\left(\frac{\tau_0} v - \frac{\tau_1} u\right)  + \frac{1}{2q}(\frac vu + \frac uv)\,.
\end{equation}
\begin{figure}
\includegraphics[height=4.2cm]{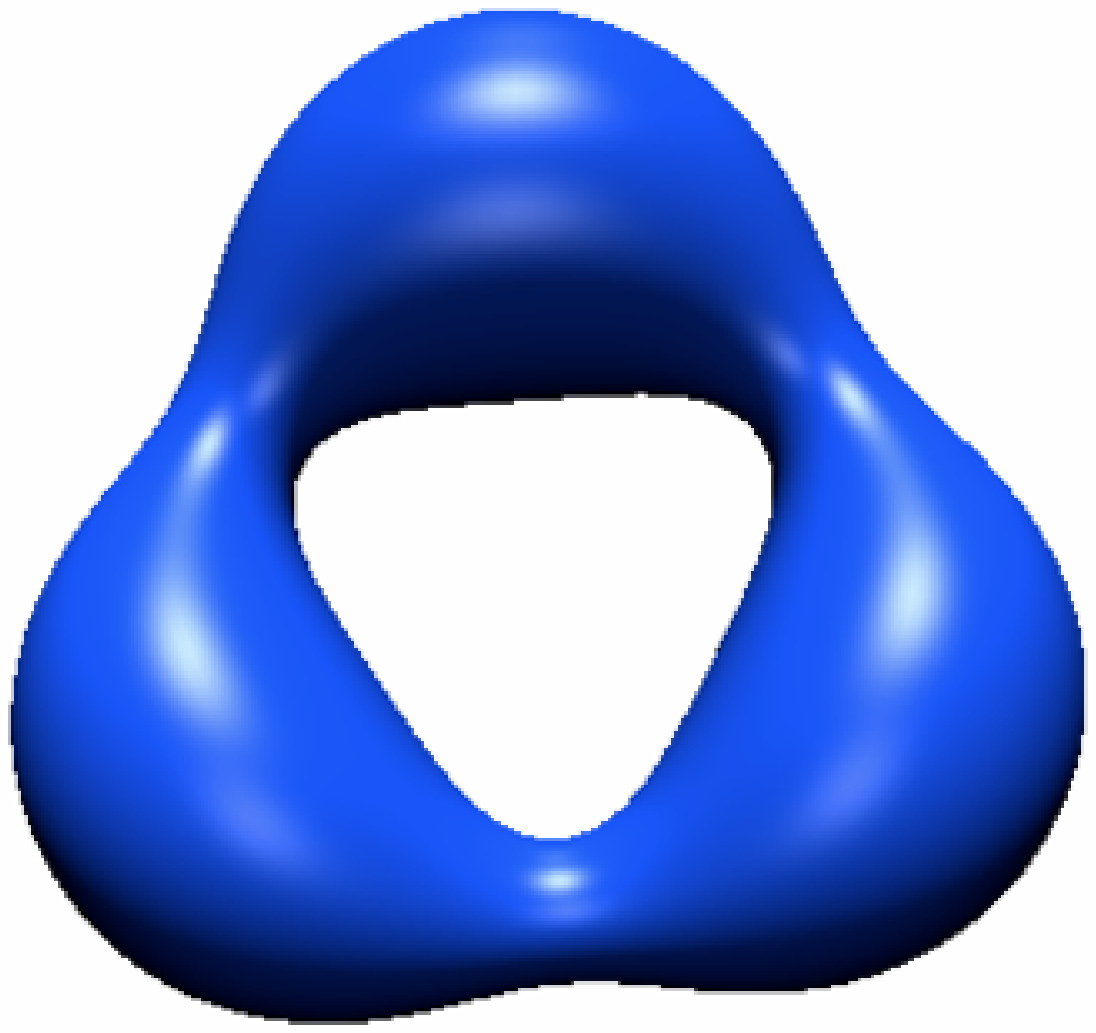}
\includegraphics[height=4.2cm]{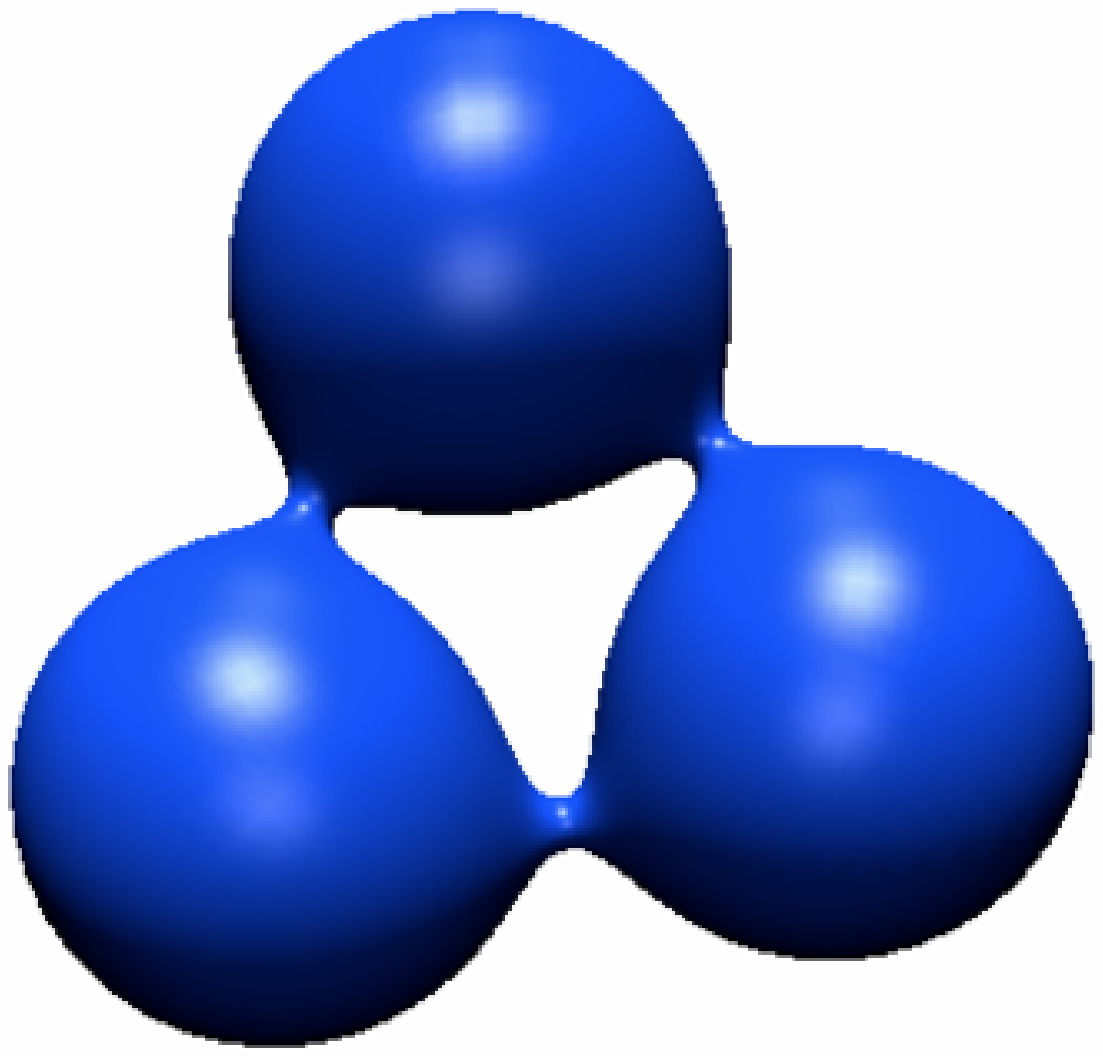}
\includegraphics[height=4.2cm]{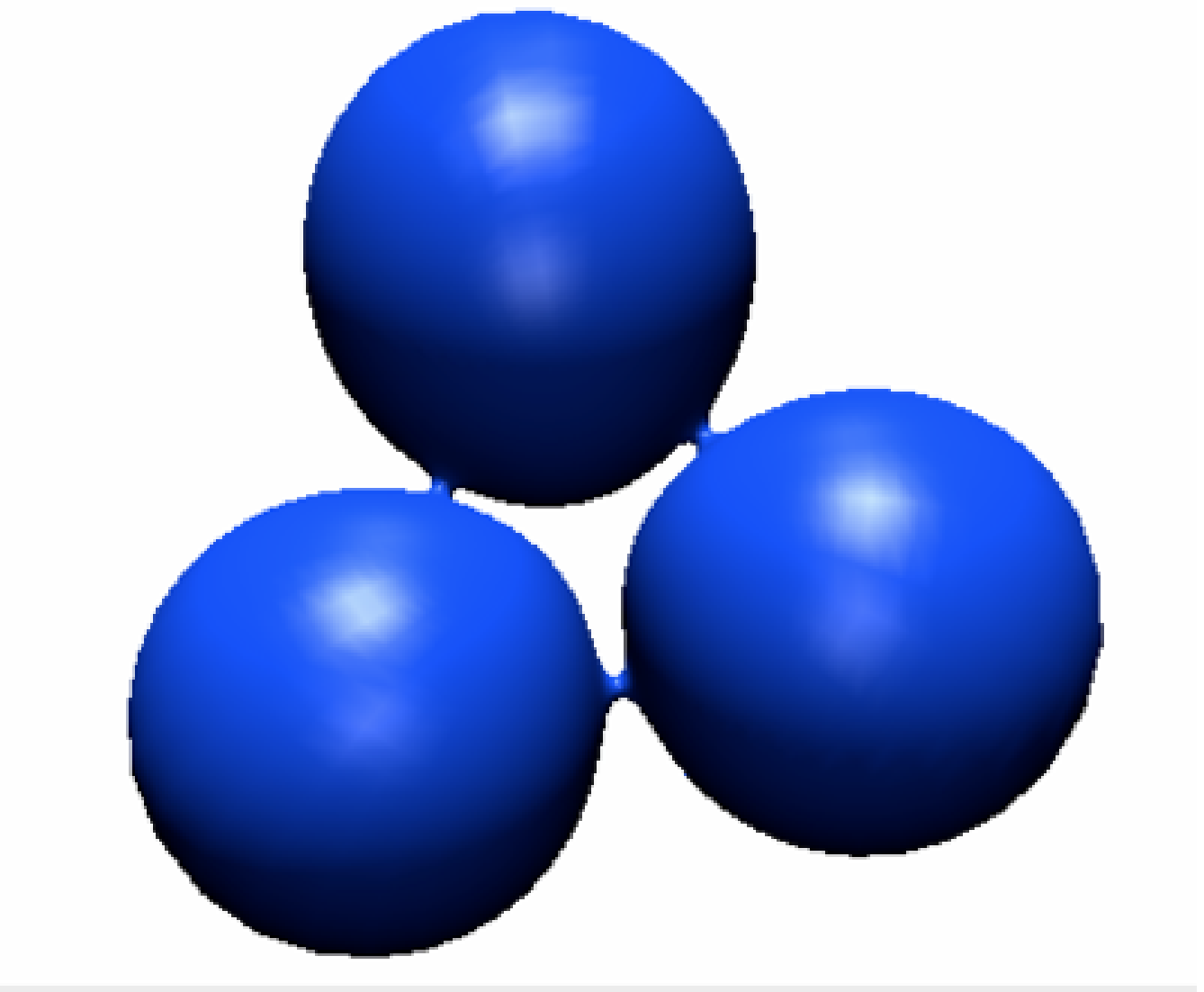}
\caption{Darboux transforms of rectangular tori with parameter (2.9,1), (3.2, 1) and (3.5,1)}
\end{figure}
For $y=0$ one easily obtains with 
\[
\tilde R  := 2u^2 + v^2(1-n^2)(2-n^2) \quad \text{ and } \quad \tilde q := 2r^2 -n^2v^2
\]
that
\[
\tau_0(0) =j \frac{n^2u}{\pi \tilde R}, \quad \text{ and }
\quad\tau_1(0) = \frac{1}{\pi v\tilde R}\left(vn^2 s + i(\tilde q -
  n^2v^2)\right)
\]
so that (\ref{eq:hat H simplified}) becomes
\[
\hat H_{S^3}(0) = 
\frac{r^2(v^2n^4-\tilde q) -2\tilde q(n^2-1)v^2}{2(n^2-1)uv \tilde q}\,.
\]

\newcommand{\yt}{\frac{1}{2n}}
Similarly, for $y=\yt$ we obtain with
\[
 \tilde R := 2u^2 + v^2(1-n^2)
\]
that
\[
\tau_0 = j\frac{u}{\pi \tilde R}, \quad \text{ and } \quad \tau_1 = -\frac{s}{\pi \tilde R}
\]
 which gives 
\[
\hat H_{S^3}(\yt) =
\frac{2u^2v^2(n^2-1) -r^2\tilde R}{2(n^2-1)uv^3}  
\]
In particular,  if $\hat f$ has constant mean curvature in $S^3$ then $\hat H_{S^3}(0) = \hat H_{S^3}(\yt)$ gives 
\[
r^2(v^2n^4-\tilde q)v^2 -2\tilde q(n^2-1)v^4 = 2u^2v^2(n^2-1)\tilde q -r^2\tilde R\tilde q
\]
which is equivalent to
\[ 4r^2(u^2+ v^2(1-n^2))^2=0\,,
\]
that is, $u=v\sqrt{n^2-1}$.  Finally, we notice that all rectangular
tori are, up to reparametrization $\hat z = vz$, $z=x+iy$, rectangular
tori with parameter $(\frac uv, 1)$. Thus, we obtain for each $n\in\N,
n\not=1$, a 1--parameter family of tori of revolution in $S^3$ with
$n$ bulges, each torus given by a polychromatic Darboux transform of
a rectangular torus with parameter $(u,1), u \ge \sqrt{n^2-1}$.
\end{proof}

\begin{figure}
\includegraphics[height=4.2cm]{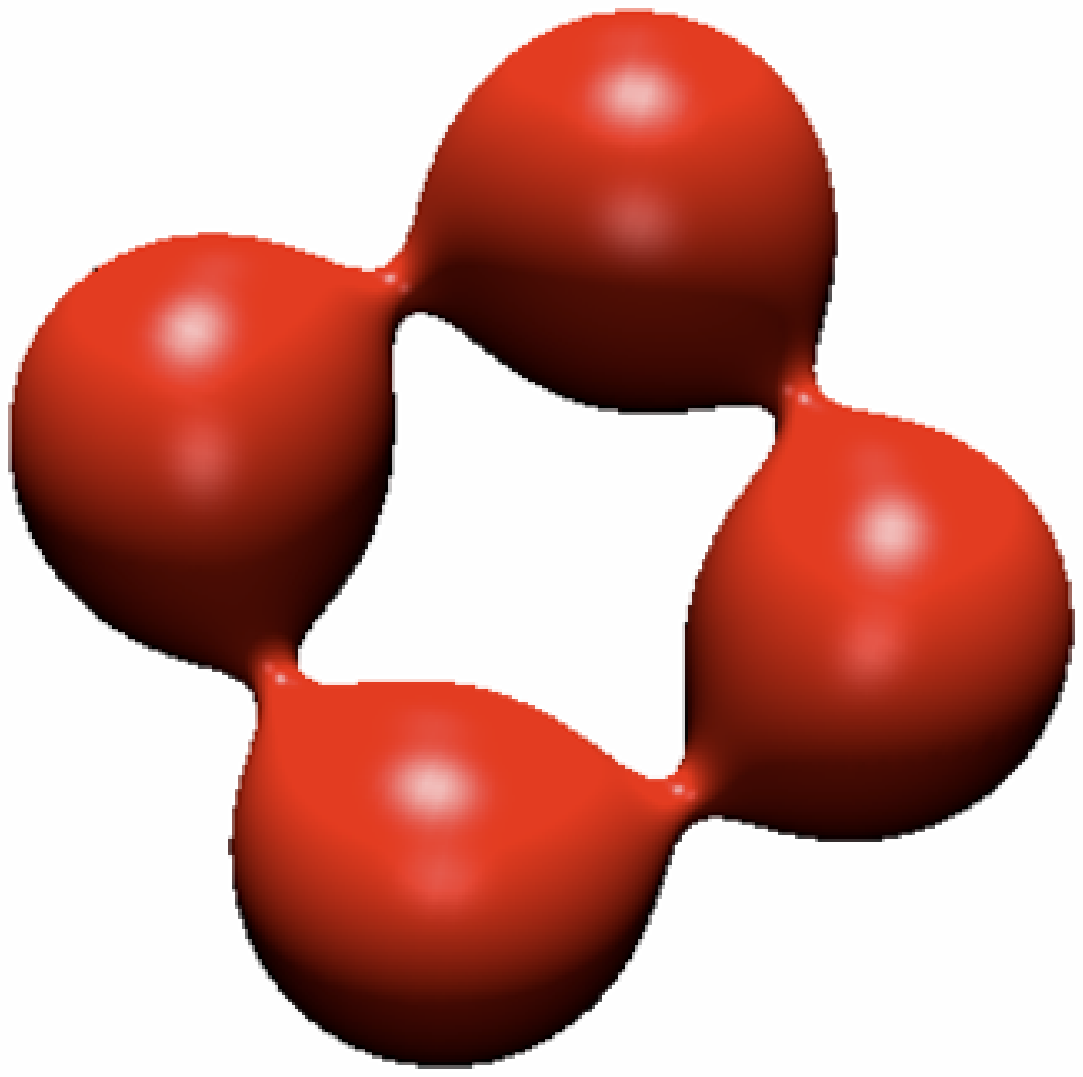}
\includegraphics[height=4.2cm]{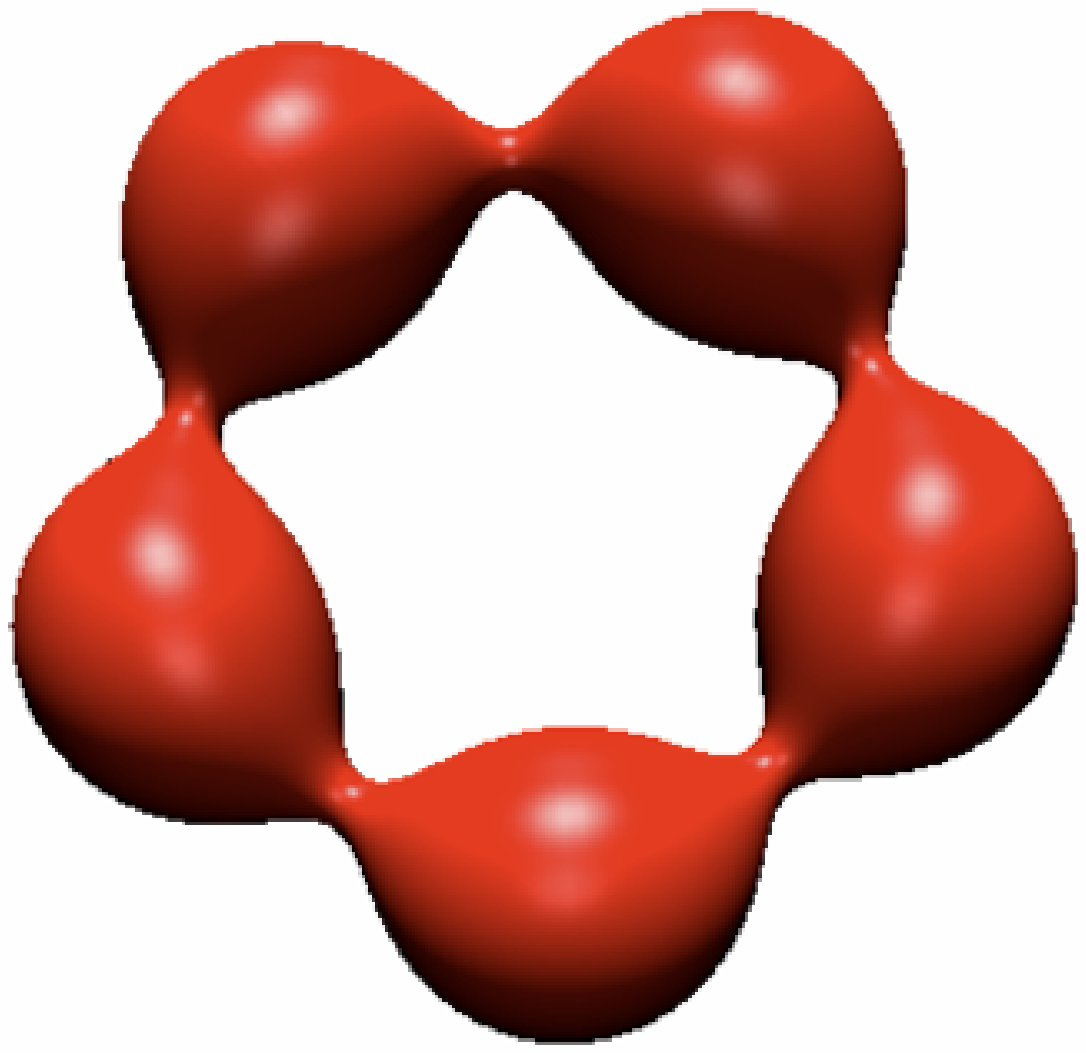}
\includegraphics[height=4.2cm]{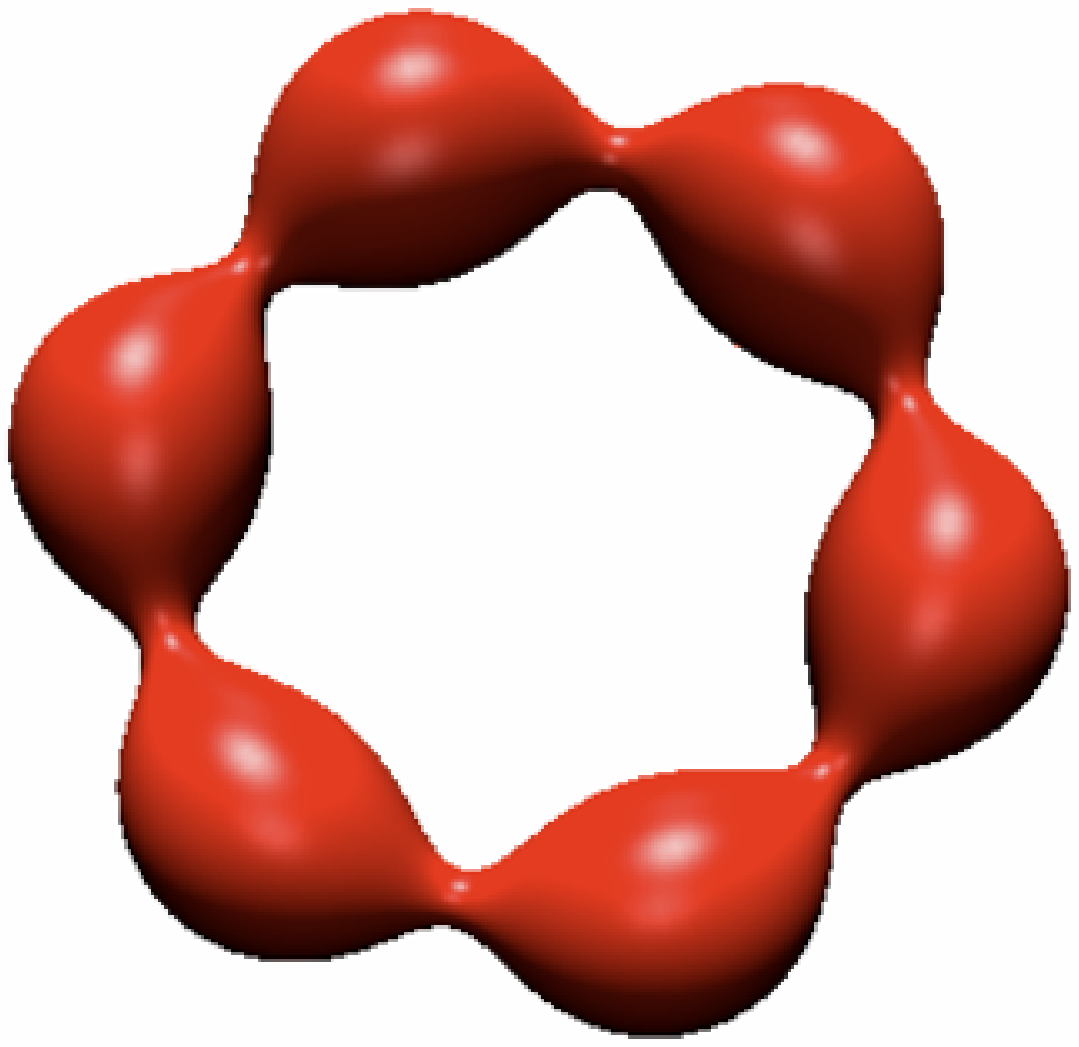}
\caption{Darboux transforms of rectangular tori with parameter (4.3,1), (5.3, 1) and (6.3,1)}
\end{figure}

\section{Polychromatic Darboux transforms of cylinders}

We use similar methods to compute polychromatic Darboux transforms of
a standard cylinder $f: M \to \R^3$. To stay close to the notations
and computations in the previous sections, our maps $f$ will take
values in $\Span\{1, j, k\}$.  Note that in this case $f$ has mean
curvature \cite{coimbra} given by
\[
H_{\R^3} = - Hi
\]
where $H$ is again given by $(dN)' = -df H$. A standard cylinder 
\[
f(x,y) = \frac 1u  e^{2\pi j u x} + 2\pi k y
\]
is then a Hamiltonian stationary immersion with harmonic left and
right normals
\[
N = e^{2\pi j u x} i
 \quad \text{ and } \quad 
R = ie^{2\pi j ux}
\]
and Lagrangian angle $\beta(z) = 2\pi \langle\beta_0, z\rangle$
with $ \beta_0 =u\,.  $ Moreover, we have $df= \ejbh dz g$ with $ g =
2\pi j e^{\pi j ux}\,, $ and $ f = e^{j\frac{\beta}2}(-\frac j {2u} +
\pi iy) \frac{g}{\pi}\,.  $ With the same methods as before (with the
obvious adaptions to the situation of a cylinder), we consider for
$a\in \R$ all cylinder with $u\ge a$, and obtain again for
\[
B = \frac12( u + a i  - \sqrt{u^2-a^2})
\]
holomorphic sections with multiplier.
The corresponding frequencies are
\[
\delta_+ =\frac u2,  \quad \delta_- = \frac u2 + a i \in\Gamma^*_{0,B} =\{\delta\in u\Z + i \R +\frac{\beta_0}2 \mid |\delta-B| = \frac{|\beta_0|}2\}
\]
and the monochromatic holomorphic sections with multiplier $h^{0,B}$
are
\[
\alpha_\pm = \frac 1 u e^{j\pi ux} (u \pm j a - k \sqrt{u^2-a^2}) e^{\pi i(\sqrt{u^2-a^2} x \mp ay)}\,.
\] 
Again, we apply Theorem \ref{thm:Darboux transforms} with constants
$m_+ = m_-=1$, and obtain, after a similar computation as in the case
of rectangular tori, the monochromatic Darboux transforms of a
cylinder for $h^{(0,B)}$ as
\[
\hat f = \ejbh( j\tau_0 + i \tau_1) \frac{g}{\pi} = 2(-e^{2\pi jux} \tau_0 + k \tau_1)
\]
with real valued functions
\[
\tau_0(x,y) =\frac 1u( -\frac 12 + \frac 1{\hat R})
\]
and
\[
\tau_1(x,y) = \pi y + \frac 1{a\hat R}\left(\sin \tilde y(1-\frac{a^2}{u^2}) + \cos \tilde y  (\frac a{u^2}\sqrt{u^2-a^2})\right)\,.
\]
Here, we have with  $\tilde y = 2\pi n v y$
\[
\hat R(y) = \frac{u^2}{4a^2} R = 1 - (1-\frac{a^2}{u^2})\cos\tilde y + \frac a {u^2} \sqrt{u^2-a^2}\sin\tilde y\,.
\]
and thus, both $\tau_0$ and $\tau_1$ only depend on $y$.  In
particular, $\hat f$ is a surface of revolution in the 3--space
spanned by $1, j, k$, and obviously, $\hat f$ is a round cylinder
whenever $u=a$ for $a\in\R$.

\begin{figure}
\includegraphics[height=4.2cm]{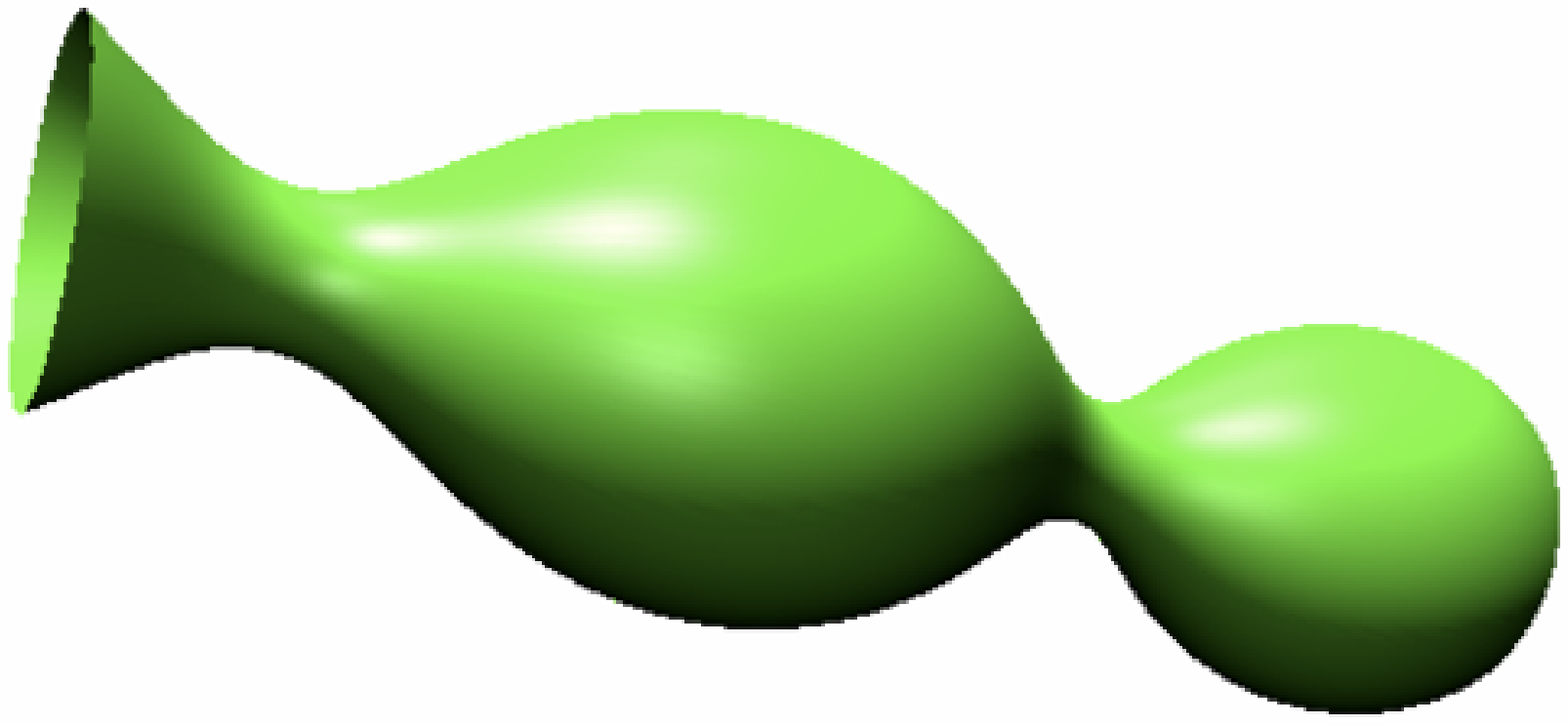}
\includegraphics[height=4.2cm]{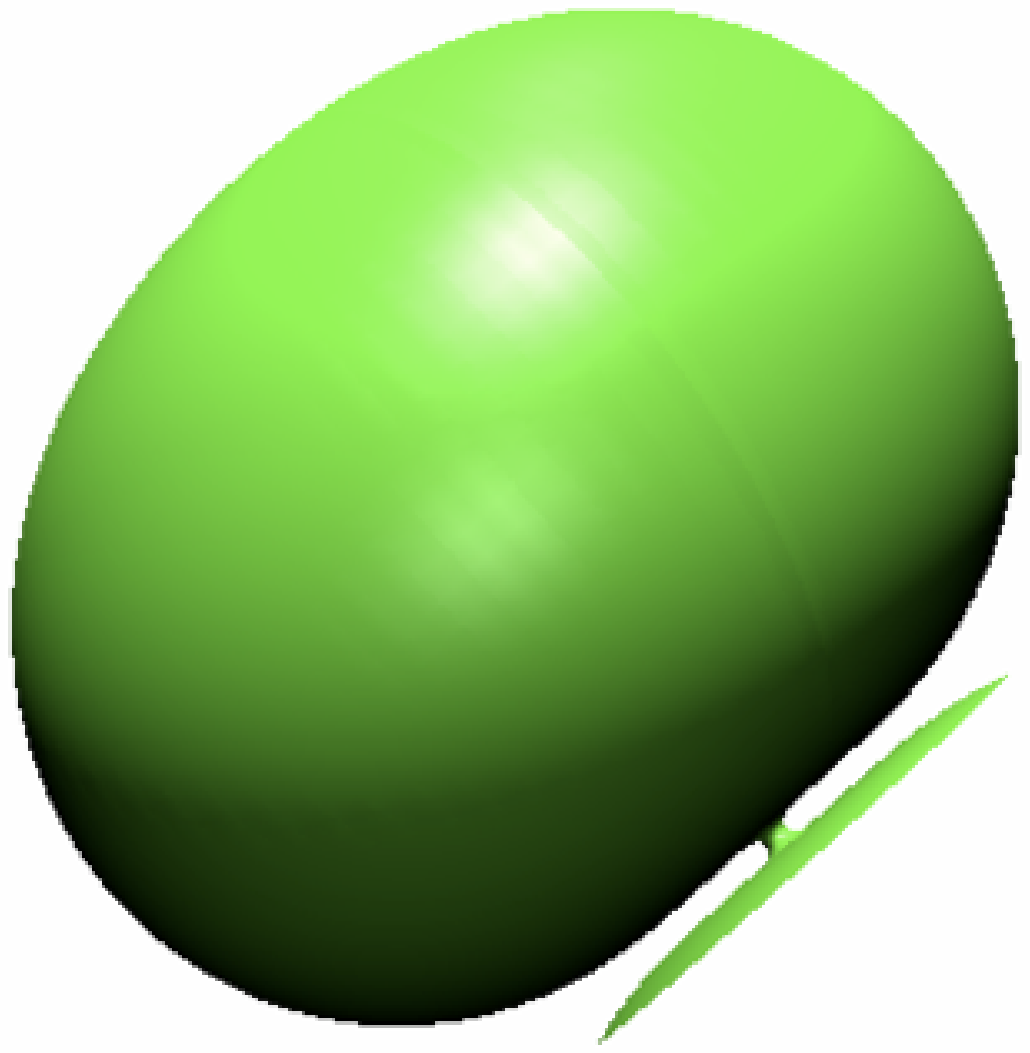}
\caption{polychromatic Darboux transforms of cylinder of radius 2.1 and 2.9}
\end{figure}

We now compute the mean curvature of $\hat f$. To that end, we observe
that $\hat f =f + T$ with
\begin{eqnarray*}
T 
&=& \frac{2}{\hat R}\left(-\frac {e^{2\pi j ux}} u +  \frac ka \left(\sin \tilde y(1-\frac{a^2}{u^2}) + \cos \tilde y  (\frac a{u^2}\sqrt{u^2-a^2})\right)\right) 
\,, 
\end{eqnarray*}
and the left normal $\hat N$ of $\hat f$ is given (\ref{eq:hatN}) by
\[
\hat N = - \frac{ie^{-2\pi j ux} (\kappa_0^2-\kappa_1^2) + 2j\kappa_0\kappa_1}{\kappa_0^2 + \kappa_1^2}\,.
\]
with
\[
\kappa_0= -\frac 1 u \quad \text{ and } \quad
\kappa_1 =  \frac 1n \left(\sin \tilde y(1-\frac{a^2}{u^2}) + \cos \tilde y  (\frac a{u^2}\sqrt{u^2-a^2})\right) 
\]
real valued.  As before, we compute with (\ref{eq:hat H})
\newcommand{\ejx}{e^{2\pi j ux}} \newcommand{\ejmx}{e^{-2\pi j ux}}
\[
\hat H = i(\frac{\hat R\kappa_0}{\lambda} + \frac 1{4\tau_0}) 
\,,
\]
and, by evaluating at $y=0$ and $y=\frac{1}{2a}$, we see that $\hat H$
constant is equivalent to $a=u$. We summarize

\begin{theorem}
  For all $a\in\R, a>0$, the Darboux transformation gives a
  1--parameter family of cylinder of revolution which are not constant
  mean curvature cylinder in the 3--space.
\end{theorem}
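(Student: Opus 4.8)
The plan is to run the computation in complete parallel with the torus case, which is why the author has arranged all the cylinder formulas to mirror the rectangular-torus formulas (same $\tau_0,\tau_1$ structure, same $\hat R$, same $\ejbh\,\tau\,g$ decomposition). The goal is to show that the polychromatic Darboux transform $\hat f$ of a standard cylinder is a genuine surface of revolution in the $3$-space $\Span\{1,j,k\}$ whose mean curvature in $\R^3$ is non-constant unless $u=a$. Since surface-of-revolution-ness and the explicit $y$-dependence of $\tau_0,\tau_1,\hat R$ have already been established in the text, the only real content left is the mean-curvature computation and the final one-variable elimination.

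First I would record the mean curvature $\hat H$ of $\hat f$ from the already-derived expression $\hat H = i\bigl(\tfrac{\hat R\,\kappa_0}{\lambda} + \tfrac{1}{4\tau_0}\bigr)$, using the cylinder relation $H_{\R^3} = -Hi$ noted at the start of the section to convert the quaternionic $\hat H$ into the scalar mean curvature in $\R^3$. Since $\hat N$, $\kappa_0$, $\kappa_1$, $\tau_0$, and $\hat R$ are all explicit functions of $y$ alone, this produces $\hat H_{\R^3}$ as an explicit rational-trigonometric function of $\tilde y = 2\pi ny$. Next I would evaluate $\hat H_{\R^3}$ at the two special values $y=0$ and $y=\tfrac{1}{2a}$, exactly as in the torus case where the comparison of $\hat H_{S^3}(0)$ and $\hat H_{S^3}(\tfrac{1}{2n})$ forced $u=v\sqrt{n^2-1}$. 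At $y=0$ one has $\hat R = 1 + \tfrac{a}{u^2}\sqrt{u^2-a^2}\cdot 0 + \cdots$ reducing to clean values, and at $y=\tfrac{1}{2a}$ the trigonometric terms collapse analogously, so both evaluations yield simple algebraic expressions in $u$ and $a$.

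The key step is then the elimination: imposing $\hat H_{\R^3}(0) = \hat H_{\R^3}(\tfrac{1}{2a})$ and clearing denominators should reduce, after cancellation, to a factor proportional to $(u^2-a^2)^2$ or $(u-a)$, thereby forcing $a=u$, i.e.\ the round cylinder. This is the analogue of the identity $4r^2(u^2+v^2(1-n^2))^2=0$ in the torus proof. I expect the main obstacle to be purely computational bookkeeping: correctly handling the quaternionic conjugations in $\hat N = -T R T\invers$ and the factor $\lambda$ (defined through $\omega = (dR)''\lambda$), since the noncommutativity makes it easy to drop signs or swap $j$ and $k$ factors. The conceptual content is light — it is the same two-point-comparison argument as for tori — but verifying that the final difference factors exactly through $(u-a)$ (rather than vanishing identically or producing spurious roots) is where care is needed, and I would double-check it by confirming that the $u=a$ case independently recovers the round cylinder already identified in the text.
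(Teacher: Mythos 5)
Your proposal follows the paper's proof exactly: the paper likewise takes the already-derived expression $\hat H = i\bigl(\frac{\hat R\kappa_0}{\lambda} + \frac{1}{4\tau_0}\bigr)$, evaluates it at $y=0$ and $y=\frac{1}{2a}$, and concludes that constancy of $\hat H$ is equivalent to $u=a$, just as the torus case compared $\hat H_{S^3}(0)$ with $\hat H_{S^3}(\frac{1}{2n})$. The only minor difference from what you anticipate is that the final elimination reduces to the relation $\sqrt{u^2-a^2} = \frac{a^2-u^2}{a}$, which is ruled out for $u>a$ by a sign comparison (the left side is nonnegative, the right side nonpositive) rather than by a polynomial factorization through $(u-a)$, but this is precisely the computational bookkeeping you flagged for careful checking.
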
 

\bibliographystyle{amsplain}
\bibliography{doc}

\end{document}